

\documentclass[reqno,12pt]{amsart} 
   %

\NeedsTeXFormat{LaTeX2e}[1994/12/01]



\usepackage[margin=2.85cm,footskip=1cm]{geometry}



\usepackage{amsmath} 



\usepackage{amssymb}      






\usepackage{amscd}      

\usepackage{amsthm}      
   
\usepackage{epsfig}      

\usepackage{amstext}      

\usepackage[all]{xy} 
\CompileMatrices 

\usepackage[latin1]{inputenc}

   
\usepackage{multirow}


      




\newcommand{\Ad}{\operatorname{Ad}}

\newcommand{\id}{\operatorname{id}}

 \newcommand{\Ext}{\operatorname{Ext}}

\newcommand{\C}{\operatorname{\mathbb{C}}}
\newcommand{\N}{\operatorname{\mathbb{N}}}




   \theoremstyle{plain}
   \newtheorem{thm}{Theorem}[section]
   
   \newtheorem{lemma}[thm]{Lemma}  
   
   \theoremstyle{definition}
   
   \newtheorem{defn}[thm]{Definition}
   
   \theoremstyle{remark}

   \numberwithin{equation}{section}









\title[Free
product of amenable groups]{Extensions of the reduced group $C^*$-algebra of a free
product of amenable groups}
  
\author{Jonas Andersen Seebach and Klaus Thomsen}


        \date{\today}


\date{\today}

\email{matkt@imf.au.dk}
\address{Institut for matematiske fag, Ny Munkegade, 8000 Aarhus C, Denmark}

\begin{document}

\maketitle

\begin{abstract} We prove that the unitary equivalence classes of
  extensions of $C^*_r(G)$ by any $\sigma$-unital stable
  $C^*$-algebra, taken modulo extensions which split via an asymptotic
  homomorphism, form a group which can be calculated from the
  universal coefficient theorem of $KK$-theory when $G$ is a free
  product of a countable collection of countable amenable groups.
\end{abstract}

\section{Introduction}

The stock of examples of $C^*$-algebras for which the semi-group of
extensions by the compact operators is not a group is still
growing. The latest newcomers consist of a series of reduced free products
of nuclear $C^*$-algebras, cf. \cite{HLSW}. This stresses the necessity of
finding a way
to handle the many extensions without
inverses. In joint work with Vladimir Manuilov the second-named author
has proposed a way to amend the definition of the semi-group of
extensions of a $C^*$-algebra by a stable $C^*$-algebra in
such a way that nothing is changed in the case of nuclear algebras
where the usual theory already works perfectly, and such that at least
some of the extensions which fail to have inverses in
the usual sense become invertible in the new, slightly weaker
sense. This new semi-group grew out of investigations of the relation
between the $E$-theory of Connes and Higson and the theory of
$C^*$-extensions, \cite{MT1}. The change consists merely in
trivializing not only the split extensions, but also the asymptotically split
extensions; those
for which there is an asymptotic
homomorphism consisting of right inverses for the quotient map,
cf. \cite{MT1}. When an extension can be made asymptotically split by
addition of another extension we say that the extension is
semi-invertible, and the resulting group of semi-invertible extensions, taken
modulo asymptotically split extensions, is an abelian group with a
close connection to the $E$-theory of Connes and Higson, \cite{CH}. In some, but not all cases where the usual semi-group
of extensions is not a group the alternative definition does give a
group; i.e. all extensions are semi-invertible, cf. \cite{MT1},\cite{Th1},\cite{MT3}. Specifically, in
\cite{MT1} this was shown to be the case when the quotient is a
suspended $C^*$-algebra and in \cite{Th1} when the
quotient is the reduced group $C^*$-algebra $C^*_r\left(\mathbb F_n\right)$ of a free group with finitely
many generators, and the ideal is the $C^*$-algebra $\mathbb K$ of
compact operators. This gave the first example of a unital $C^*$-algebra
for which all extensions by the compact operators are semi-invertible,
but not all invertible; by the result of Haagerup and Thorbjørnsen,
\cite{HT}, there are non-invertible extensions of $C^*_r\left(\mathbb
  F_n\right)$ by $\mathbb K$ when $n \geq 2$. The purpose of the
present note is to show that the situation in \cite{Th1} is not
exceptional at all. This is done by showing that all extensions of a reduced group
$C^*$-algebra $C^*_r(G)$ by any stable $\sigma$-unital $C^*$-algebra
is semi-invertible when $G$ is
the free product of a countable collection of discrete countable and amenable groups. The basic idea of the proof is identical to that employed in
\cite{Th1}. The crucial improvement over the argument from \cite{Th1}
is that the explicitly given homotopy of representations of $\mathbb F_n$
from \cite{C} is replaced by results of Dadarlat and Eilers from \cite{DE}. The
pairing in the first variable of the usual extension group $\Ext^{-1}$ with
$KK$-theory and Cuntz' results on K-amenability from \cite{C} remain
key ingredients.  

In \cite{Th1} the inverse of an extension, modulo asymptotically split
extensions, could be taken to be invertible in the usual sense,
i.e. to admit a completely positive contractive splitting. This turns
out to be possible also in the more general situation considered
here, and as a consequence it follows that the obvious
map from the usual KK-theory group $\Ext^{-1}(C^*_r(G),B)$ to the
group of all extensions, taken modulo asymptotically split extensions,
is surjective. By combining results of Cuntz, Tu and Thomsen it
follows that $C^*_r(G)$ satisfies the universal coefficient theorem of
Rosenberg and Schochet, and from this it follows easily that the map
is also injective. Hence
the group of extensions of $C^*_r(G)$ by $B$, taken modulo the
asymptotically split extensions, can be calculated from K-theory by
use of the UCT.

\section{The results}

\begin{thm}\label{main} Let $(G_i)_{i\in \N}$ be a countable collection of
  discrete countable amenable groups and let $G = \star_i G_i $ be
  their free product. Let $B$ be a stable $\sigma$-unital
  $C^*$-algebra. For every extension $\varphi : C^*_r(G) \to Q(B)$
  there is an invertible extension $\varphi' : C^*_r(G) \to Q(B)$ such
  that $\varphi \oplus \varphi'$ is asymptotically split.
\end{thm}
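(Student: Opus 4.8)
\noindent\emph{Proof strategy.} The plan is to reproduce, in the present generality, the argument of \cite{Th1} for $C^*_r(\mathbb F_n)$, using the K-amenability of free products established by Cuntz \cite{C} and replacing the explicit homotopy of representations of $\mathbb F_n$ from \cite{C} by the rigidity results of Dadarlat and Eilers \cite{DE}. I begin with a normalization: since $B$ is stable we may absorb a suitable split extension into $\varphi$, so it is harmless to assume $\varphi$ is ``large'' in the relevant absorption sense and to work with Busby invariants $C^*_r(G)\to Q(B)$ and completely positive contractive maps into $M(B)$. Write $C^*_r(G)=\star_i C^*_r(G_i)$ as a reduced free product, with canonical embeddings $\iota_i:C^*_r(G_i)\hookrightarrow C^*_r(G)$. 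Because each $G_i$ is amenable, $C^*_r(G_i)=C^*(G_i)$ is nuclear and satisfies the UCT, so each restriction $\varphi_i:=\varphi\circ\iota_i$ is an \emph{invertible} extension of $C^*_r(G_i)$ --- it has a completely positive contractive lift to $M(B)$ --- with a well-defined class $[\varphi_i]\in KK^1(C^*_r(G_i),B)$.

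Next I would bring in the full free product $C^*(G)=\star_i C^*(G_i)$, which unlike $C^*_r(G)$ \emph{is} a coproduct, together with the canonical surjection $\lambda_G:C^*(G)\to C^*_r(G)$, which restricts to an isomorphism on each $C^*(G_i)$. By \cite{C}, $G$ is K-amenable, so $\lambda_G$ is a $KK$-equivalence; write $\beta\in KK(C^*_r(G),C^*(G))$ for its inverse, so that $[\lambda_G]\cdot\beta=1$, and keep in mind that this identity is witnessed by a concrete homotopy arising from the action of $G$ on its Bass--Serre tree (on which the amenable $G_i$ appear as vertex stabilizers). Using the pairing of $\Ext^{-1}$ with $KK$ in the first variable, together with the UCT for $C^*_r(G)$ (valid by the results of Cuntz, Tu and Thomsen recalled in the introduction), I would then produce the required invertible extension $\varphi':C^*_r(G)\to Q(B)$: one assembles over the coproduct $C^*(G)$ an extension whose $i$-th free factor is (a completely positive contractive lift of) the conjugate extension $\overline{\varphi_i}$, so as to realize $-[\varphi_i]$ on each block, transports it back to $C^*_r(G)$ through $\beta$ and $\lambda_G$, and represents the resulting $KK^1$-class by an honest completely positive contractive split extension $\varphi'$ via the UCT, arranged so that $(\varphi\oplus\varphi')\circ\iota_i$ has vanishing $KK^1$-class for every $i$.

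It then remains to produce an asymptotic homomorphism $\Psi=(\Psi_t)_{t\in[1,\infty)}:C^*_r(G)\to M(B)$ with $\pi\circ\Psi_t=\varphi\oplus\varphi'$ for all $t$, where $\pi:M(B)\to Q(B)$ is the quotient map; this is precisely the point at which \cite{DE} takes over the role played by Cuntz's homotopy in \cite{Th1}. For each $i$ the invertible extension $(\varphi\oplus\varphi')\circ\iota_i$ of the nuclear algebra $C^*_r(G_i)$ has zero $KK^1$-class, hence by the Dadarlat--Eilers uniqueness theorem it admits, after the stabilization arranged above, a completely positive contractive asymptotic homomorphism $C^*_r(G_i)\to M(B)$ lifting it and asymptotically unitarily equivalent to a fixed split lift. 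Combining these block-wise asymptotic lifts with the explicit K-amenability homotopy implementing $[\lambda_G]\cdot\beta=1$, and using the freeness of the subalgebras $C^*_r(G_i)$ inside $C^*_r(G)$ to interpolate and glue, one assembles $\Psi$; this shows that $\varphi\oplus\varphi'$ is asymptotically split, with $\varphi'$ invertible, which is the assertion of the theorem.

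The main obstacle is exactly this last gluing. The reduced free product is \emph{not} a coproduct, so a family of $*$-homomorphisms --- or of homotopies, or of asymptotic morphisms --- defined on the blocks $C^*_r(G_i)$ does not extend over $C^*_r(G)$ of its own accord; indeed the failure of such naive gluing for $\varphi$ itself is what obstructs its completely positive contractive liftability and what makes $C^*_r(G)$ non-nuclear. K-amenability is what circumvents this: it supplies a controlled passage between the reduced and full pictures at the level of $KK$ and, crucially, an explicit homotopy implementing $[\lambda_G]\cdot\beta=1$, while \cite{DE} upgrades, for the nuclear blocks, equalities in $KK$ to genuine asymptotic unitary equivalences. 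The technical heart of the proof is to run these two inputs simultaneously over all the free factors while keeping every equivalence asymptotic rather than merely homotopic, so that $\varphi\oplus\varphi'$ comes out asymptotically split and not just stably trivial in homotopy.
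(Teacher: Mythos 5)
There is a genuine gap, and you have in fact located it yourself without closing it: the final ``gluing'' of the block-wise Dadarlat--Eilers data into an asymptotic homomorphism $\Psi=(\Psi_t):C^*_r(G)\to M(B)$ is asserted (``one assembles $\Psi$'') but never carried out, and as described it cannot be carried out. Since $C^*_r(G)$ is not a coproduct, a family of asymptotic morphisms on the blocks $C^*_r(G_i)$ does not extend to $C^*_r(G)$, and there is no ``explicit K-amenability homotopy'' available here to mediate the gluing: the explicit homotopy of representations in \cite{C} exists for free groups, which is precisely why the general case needs a substitute. The paper's substitute is not applied to the extension $\varphi\oplus\varphi'$ at all; rather, Theorem 3.12 of \cite{DE} is applied to the K-amenability data itself (Cuntz's representations $\sigma\circ\mu$ and $h_t\oplus\sigma_0\circ\mu$ on Hilbert space, restricted to each $C^*(G_i)$) to manufacture norm-continuous paths of honest $*$-homomorphisms on each $C^*(G_j)$, which \emph{do} glue --- over the full free product $C^*(G)$, where the universal property holds (Lemma \ref{correction}). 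The descent to $C^*_r(G)$ is then achieved by a completely different mechanism: one first arranges, via absorption and a diagram chase in Skandalis' six-term sequence, that $\varphi'\circ\mu\oplus\varphi''\circ\mu$ is genuinely unitally split, given by a representation $R$ of $G$, and then tensors $R$ with the homotopy $\nu'_s$; Fell's absorption principle together with the compactness of $\nu'_s-\nu'_0$ forces every stage $q_{B\otimes\mathbb K}\circ h_{R\otimes\nu'_s}$ to factor through $C^*_r(G)$. None of this appears in your outline.

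The second structural omission is that you aim directly at an asymptotic splitting, whereas the paper only ever produces a \emph{strong homotopy} from $\varphi\oplus\varphi'$ (plus a split summand) to a split extension, and then invokes Lemma 4.3 of \cite{MT2} to convert ``invertible and strongly homotopic to split'' into ``asymptotically split after adding an invertible extension''. Without that reduction, your plan requires building the asymptotic lift $\Psi$ by hand, which is exactly the step that fails. A smaller but related inaccuracy: realizing $-[\varphi_i]$ in $KK^1$ on each block is not enough to glue a splitting over $C^*(G)$; one needs the restrictions to be honestly (unitally) split, which is why the paper first adds a unitally absorbing split extension $\pi_1$ and uses Lemma \ref{2.2}. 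Your ingredients (K-amenability, nuclearity of the blocks, Dadarlat--Eilers) are the right ones, but the architecture that makes them combine --- glue over $C^*(G)$, descend by Fell absorption, finish by the homotopy-invariance lemma of \cite{MT2} --- is missing.
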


More explicitly the conclusion is that there is an extension
$\varphi'$, a completely positive contraction $\psi : C_r^*(G) \to
M(B)$ and an asymptotic $*$-homomorphism $\pi = \left(\pi_t\right)_{t
  \in [1,\infty)} : C^*_r(G) \to M(B)$, in the sense of Connes and
Higson, cf. \cite{CH},  such that $\varphi' = q_B \circ
\psi$ and $\varphi \oplus \varphi' = q_B\circ \pi_t$ for all $t \in
[1,\infty)$, where $q_B : M(B) \to Q(B)$ is the quotient map.

If only one of the $G_i$'s in Theorem \ref{main} is non-trivial or if $G =
\mathbb Z_2 \star \mathbb Z_2$, the conclusion of the theorem is
trivial and can be improved because $G$ is then amenable. It seems very
plausible that such cases are exceptional; indeed it follows from
\cite{HLSW} that there is a non-invertible extension of $C^*_r(G)$ by
the compact operators whenever $G$ is the free product of finitely
many non-trivial groups each of which is either abelian or finite and
$G \neq \mathbb Z_2 \star \mathbb Z_2$.

\bigskip

As in \cite{Th1} we will prove Theorem \ref{main} by use of results
from \cite{MT2}. Recall that two extensions $\varphi,\varphi' : A \to
Q(B)$ are \emph{strongly homotopic} when there is a $*$-homomorphism
$A \to C[0,1]\otimes Q(B)$ giving us $\varphi$ when we evaluate at $0$
and $\varphi'$ when we evaluate at 1. By Lemma 4.3 of \cite{MT2} it
suffices then to
establish the following

\bigskip

\begin{thm}\label{main2} Let $(G_i)_{i\in \N}$ be a countable
  collection of
  discrete countable amenable groups and let $G = \star_i G_i $ be
  their free product. Let $B$ be a stable $\sigma$-unital
  $C^*$-algebra. For every extension $\varphi : C^*_r(G) \to Q(B)$
  there is an invertible extension $\varphi' : C^*_r(G) \to Q(B)$ such
  that $\varphi \oplus \varphi'$ is strongly homotopic to a split extension.
 \end{thm}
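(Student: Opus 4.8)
The plan is to follow the argument of \cite{Th1}, which established Theorem~\ref{main2} for $G=\mathbb F_n$, and to replace at the decisive point the explicit homotopy of representations of $\mathbb F_n$ borrowed there from \cite{C} by the approximation and classification results of Dadarlat and Eilers from \cite{DE}. First I would make the usual reductions: by adding a degenerate extension we may assume that $\varphi$ is unital, and, since $B$ is stable and $\sigma$-unital, we may fix an identification $B\cong B\otimes\mathbb K$ and hence a unital copy of $\mathcal B(\ell^2)$ inside $M(B)$, which supplies an abundance of isometries with mutually orthogonal ranges. The first substantial point is that for each $i$ the restriction $\varphi_i:=\varphi|_{C^*_r(G_i)}$ is an extension of $C^*_r(G_i)=C^*(G_i)$, which is nuclear because $G_i$ is amenable; hence $\varphi_i$ is invertible and, by the Choi--Effros lifting theorem, admits a unital completely positive splitting $\psi_i:C^*_r(G_i)\to M(B)$.

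Next I would construct the invertible extension $\varphi'$. The relevant input is the K-amenability of $G$: by Cuntz \cite{C} a free product of amenable groups is K-amenable, so $C^*_r(G)$ is K-nuclear and the canonical surjection $\lambda:C^*(G)\to C^*_r(G)$, with $C^*(G)=\star_i C^*(G_i)$ the full free product, is a $KK$-equivalence; in particular $\Ext^{-1}(C^*_r(G),B)\cong KK^1(C^*_r(G),B)$ is a group. Exactly as in \cite{Th1}, combining this $KK$-equivalence with the functoriality of $\Ext^{-1}(-,B)$ in its first variable --- that is, the pairing of $\Ext^{-1}$ with $KK$-theory --- one produces an invertible extension $\varphi':C^*_r(G)\to Q(B)$, admitting a completely positive contractive splitting $\psi:C^*_r(G)\to M(B)$, for which $\varphi\oplus\varphi'$ becomes trivial at the level of $KK$-theory. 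It then remains to turn this $KK$-triviality into an actual strong homotopy.

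This last step is the heart of the matter, and here the free-product structure is essential. Writing $C^*_r(G)$ as the reduced free product $\star_i\bigl(C^*_r(G_i),\tau_i\bigr)$ of the factors with respect to their canonical traces, one sees that $\varphi\oplus\varphi'$ is assembled factor by factor from the completely positive lifts $\psi_i\oplus\bigl(\psi|_{C^*_r(G_i)}\bigr)$, each of which splits the corresponding restriction exactly; the obstruction to gluing these pieces into a (possibly only asymptotic) splitting of $\varphi\oplus\varphi'$ itself is precisely the rotation needed to bring them into mutually free position inside $M(B)$. For $G=\mathbb F_n$ this rotation was carried out by Cuntz's explicit path of representations; for a general free product of amenable groups no such formula is available, and instead one invokes \cite{DE} to compare $\varphi\oplus\varphi'$ with a split extension carrying the same $KK$-theoretic invariant and to upgrade this comparison into an honest $*$-homomorphism $C^*_r(G)\to C[0,1]\otimes Q(B)$ implementing the strong homotopy to a split extension. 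Finally, as $G$ is the free product of \emph{countably} many $G_i$, one realizes $C^*_r(G)$ as the inductive limit of the reduced free products of the finite sub-families and passes to the limit; this last point is routine once the finite case is in hand. By Lemma~4.3 of \cite{MT2} Theorem~\ref{main} then follows as well.

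The step I expect to be the main obstacle is the construction of this strong homotopy. The difficulty is that $C^*_r(G)$ is not nuclear but only K-nuclear, so the classical nuclear-case arguments do not apply verbatim: there is no general theorem to the effect that an extension with vanishing $KK^1$-invariant over a K-nuclear algebra is homotopic to a split one. K-amenability and the pairing of $\Ext^{-1}$ with $KK$-theory serve to import the missing $KK$-theoretic rigidity, the free-product structure provides the factor-wise completely positive lifts, and the results of \cite{DE} are what convert all of this into a concrete homotopy of extensions. Keeping the freeness of these lifts under uniform control all along the rotation --- so that the rotated maps remain close to $*$-homomorphisms throughout --- is where the real work is concentrated.
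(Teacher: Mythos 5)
Your high-level strategy (follow \cite{Th1}, use K-amenability and the pairing of $\Ext^{-1}$ with $KK$-theory, and let \cite{DE} replace Cuntz's explicit homotopy) matches the paper's, but the step you yourself flag as the heart of the matter is missing its actual mechanism, and the heuristic you offer in its place would not work. Two concrete problems. First, completely positive lifts do not glue over a free product: the universal property of $C^*(G)=\star_i C^*(G_i)$ only assembles unitary representations, i.e.\ $*$-homomorphisms, from the factors, so the factor-wise completely positive splittings $\psi_i\oplus\psi|_{C^*(G_i)}$ cannot be ``rotated into mutually free position'' to produce anything defined on all of $C^*_r(G)$. The paper avoids this by first adding a \emph{unitally absorbing} split extension $\pi_1$, so that the restriction of $\varphi'\circ\mu\oplus\varphi''\circ\mu$ to each nuclear factor $C^*(G_i)$ becomes honestly unitally split (not merely invertible); genuine splittings do glue via the universal property, yielding a single unitary representation $R:G\to M(B)$ with $q_B\circ h_R=\varphi'\circ\mu\oplus\varphi''\circ\mu$. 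Second, \cite{DE} cannot be applied directly to extensions by a general stable $\sigma$-unital $B$: Theorem 3.12 there compares admissible $*$-homomorphisms into $B(H)$. In the paper it is used only at the Hilbert-space level, factor by factor, to build a norm-continuous path $\nu_s$ of unital representations of $C^*(G)$ from $\sigma\circ\mu$ (weakly contained in $\lambda$) to $h_t\oplus\sigma_0\circ\mu$ with $\nu_s-\nu_0$ compact-valued; K-amenability (Lemma \ref{K-amen}) is what makes the two endpoints define the same $KK$-class on each factor so that \cite{DE} applies there.

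The missing idea that converts all this into a strong homotopy of extensions is the Cuntz tensoring trick: one considers the path $q_{B\otimes\mathbb K}\circ h_{R\otimes\nu'_s}$. At $s=1$ the trivial summand $t$ of $\nu'_1$ reproduces $(\varphi'\oplus\varphi'')\circ\mu$ up to a split summand, while at $s=0$ one gets a split extension; and the entire path factors through $C^*_r(G)$ because $\nu'_0$ is weakly contained in $\lambda$ (so the $R\otimes\nu'_0$ term is controlled by the reduced norm) and $\nu'_s-\nu'_0$ takes values in $\mathbb K$, which lets the remainder be estimated through the injective map $(\varphi'\oplus\varphi'')\otimes\id_{\mathbb K}$ and Fell's absorption principle. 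Without this step your argument has no route from $KK$-triviality to an actual homotopy, precisely because, as you note, $C^*_r(G)$ is not nuclear. Two smaller points: the reduction to the unital case is not just ``adding a degenerate extension'' --- the paper needs a $K_0$ argument to show the unit of $\varphi\circ\mu\oplus\varphi'\circ\mu$ and its complement are trivial in $K_0(Q(B))$ before conjugating into a unital extension --- and no inductive limit over finite subfamilies is needed, since the universal property of the free product handles countably many factors at once.
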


For the proof of Theorem \ref{main2} we will need the following notion.

\begin{defn} \label{weak}
Let $ A $ be a $ C^* $-algebra and $ \varphi, \psi $ be $ * $-representations 
 of $ A $ on some Hilbert spaces.
 Then $ \varphi $ is weakly contained in $ \psi $ if $ \ker\psi
 \subseteq \ker\varphi $.
\end{defn}

If $ \sigma, \pi $ are unitary representations of a locally compact group then
 $ \sigma $ is weakly contained in $ \pi $ if and only if
 the representation of the full group $ C^* $-algebra
 corresponding to $ \sigma $ is weakly contained in the
 representation corresponding to $ \pi $. An equivalent definition of weak containment in this case, is that every positive definite function associated to $ \sigma $
 can be approximated uniformly on compact subsets by finite sums of positive definite functions associated
 to $ \pi $. See sections 3.4 and 18.1 of \cite{Di} for details.

A proof of the following lemma can be found in \cite{BHV}.

\begin{lemma}\label{cuntz2}
Let $\sigma,\pi$ be unitary representations of a locally compact group. Assume that $\sigma$
is weakly contained in the left regular representation $ \lambda $. It follows
that $\sigma \otimes \pi$ is weakly contained in $\lambda$.
\end{lemma}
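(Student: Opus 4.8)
The plan is to reduce the statement to two classical facts: Fell's absorption principle, and the stability of weak containment under tensoring with a fixed representation. First I would recall Fell's absorption principle: for \emph{any} unitary representation $\pi$ of $G$ on a Hilbert space $\mathcal H$, the representation $\lambda \otimes \pi$ is unitarily equivalent to $\lambda \otimes 1_{\mathcal H}$. To see this one identifies $L^2(G) \otimes \mathcal H$ with $L^2(G,\mathcal H)$ and introduces the operator $W$ given by $(W\xi)(g) = \pi(g^{-1})\xi(g)$; since each $\pi(g^{-1})$ is unitary and $g \mapsto \pi(g)$ is continuous, $W$ is a well-defined unitary, with inverse $(W^{-1}\eta)(g) = \pi(g)\eta(g)$, and a short computation shows that $W(\lambda(h) \otimes \pi(h))W^{-1} = \lambda(h) \otimes 1_{\mathcal H}$ for every $h \in G$. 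Thus $\lambda \otimes \pi$ is a (possibly infinite) multiple of $\lambda$, so every positive definite function associated to $\lambda \otimes \pi$ is a uniformly convergent sum of positive definite functions associated to $\lambda$; in particular $\lambda \otimes \pi$ is weakly contained in $\lambda$.

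Next I would show that $\sigma \otimes \pi$ is weakly contained in $\lambda \otimes \pi$, using the coefficient description of weak containment recalled just before the lemma. A positive definite function of $\sigma \otimes \pi$ associated to an elementary tensor $\xi \otimes \eta$ is the product $g \mapsto \langle \sigma(g)\xi, \xi \rangle \langle \pi(g)\eta, \eta \rangle$ of a positive definite function of $\sigma$ and one of $\pi$, and a general positive definite function of $\sigma \otimes \pi$ is a uniform limit of finite sums of such products (approximate a vector of $\mathcal H_\sigma \otimes \mathcal H_\pi$ by finite sums of elementary tensors and use $|\langle Tu, u \rangle| \le \|u\|^2$). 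Since $\sigma$ is weakly contained in $\lambda$, the factor $g \mapsto \langle \sigma(g)\xi, \xi \rangle$ is, uniformly on every compact subset of $G$, a limit of finite sums of positive definite functions associated to $\lambda$; multiplying by the bounded continuous function $g \mapsto \langle \pi(g)\eta, \eta \rangle$ and observing that the product of a positive definite function of $\lambda$ with one of $\pi$ is a positive definite function of $\lambda \otimes \pi$, one finds that every positive definite function of $\sigma \otimes \pi$ is a limit, uniformly on compacta, of finite sums of positive definite functions associated to $\lambda \otimes \pi$. Hence $\sigma \otimes \pi$ is weakly contained in $\lambda \otimes \pi$.

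Finally, weak containment is transitive --- this is immediate from Definition~\ref{weak}, phrased in terms of kernels of the associated representations of $C^*(G)$ --- so combining the two steps gives that $\sigma \otimes \pi$ is weakly contained in $\lambda \otimes \pi$, which is weakly contained in $\lambda$, whence $\sigma \otimes \pi$ is weakly contained in $\lambda$, as required. Nothing in this argument is deep; the steps that require genuine care are the bookkeeping in the second paragraph --- making the approximation of products of positive definite functions uniform on compact sets, and passing from elementary tensors to arbitrary vectors --- and checking that $W$ really is a unitary in the locally compact, rather than merely discrete, setting. I expect the former to be the main (though modest) obstacle.
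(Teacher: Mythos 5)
The paper offers no proof of this lemma at all --- it simply cites Bekka--de la Harpe--Valette --- and the proof found there is precisely your two-step reduction: Fell's absorption principle shows $\lambda \otimes \pi$ is unitarily equivalent to a multiple of $\lambda$, and continuity of the tensor product with respect to weak containment gives $\sigma \otimes \pi \prec \lambda \otimes \pi$; transitivity finishes. Your first and third paragraphs, including the computation with the unitary $W$, are correct.

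The one step that does not hold together as written is the claim in your second paragraph that a general positive definite function of $\sigma \otimes \pi$ is a uniform limit of finite sums of products $\langle \sigma(\cdot)\xi,\xi\rangle\langle\pi(\cdot)\eta,\eta\rangle$, justified by approximating $u \in \mathcal H_\sigma \otimes \mathcal H_\pi$ in norm by finite sums of elementary tensors. If $u_n = \sum_i \xi_i \otimes \eta_i$, then $\langle(\sigma\otimes\pi)(g)u_n,u_n\rangle = \sum_{i,j}\langle\sigma(g)\xi_i,\xi_j\rangle\langle\pi(g)\eta_i,\eta_j\rangle$, and the cross terms $i \neq j$ are products of \emph{off-diagonal} matrix coefficients, not of positive definite functions; polarization only rewrites them as complex linear combinations of positive definite functions, which takes you outside the cone of finite sums that the weak containment criterion requires. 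The standard repair avoids general vectors altogether: since the elementary tensors are total in $\mathcal H_\sigma \otimes \mathcal H_\pi$, an element of $C^*(G)$ killed by every cyclic subrepresentation of $\sigma\otimes\pi$ generated by an elementary tensor is killed by $\sigma\otimes\pi$ itself, so by Definition~\ref{weak} it suffices to show that each such cyclic piece is weakly contained in $\lambda\otimes\pi$; for a cyclic representation the coefficient criterion need only be verified on the cyclic vector $\xi\otimes\eta$, and that is exactly the case your argument does handle correctly (multiply the approximants of $\langle\sigma(\cdot)\xi,\xi\rangle$ by the bounded function $\langle\pi(\cdot)\eta,\eta\rangle$). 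With that substitution the proof is complete and coincides with the cited one.
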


For any discrete group $G$ we denote in the following the canonical
surjective $*$-homomor\-phism $C^*(G) \to C^*_r(G)$ from the full to the
reduced group $C^*$-algebra by $\mu$.

Besides the main results of \cite{C} we shall also need the following
technical lemma concerning Cuntz' K-amenability. See \cite{C} for the proof.

\begin{lemma} \label{K-amen}
Let $ H $ be an infinite-dimensional separable Hilbert space and let $ G $ be a countable discrete K-amenable group. Then there exist $*$-homomorphisms
$ \sigma, \sigma_0:C_r^*(G) \to B(H) $ such that $\sigma \circ \mu,
h_t \oplus \sigma_0 \circ \mu :C^*(G) \to B(H)  $ are unital, $\sigma
\circ \mu(a) - \left(h_t \oplus \sigma_0 \circ \mu\right)(a) \in
\mathbb K$ for all $a \in C^*(G)$, and
$\left[\sigma \circ \mu,h_t \oplus \sigma_0 \circ \mu \right] = 0$
in $KK\left(C^*(G), \mathbb C\right)$,
 where $h_t : C^*(G) \to \mathbb C
\subseteq B(H)$ is the $*$-homomorphism going with the trivial
one-dimensional representation $t$ of $G$.
\end{lemma}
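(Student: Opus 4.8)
The plan is to extract the lemma from Cuntz's theorem on K-amenability, which says that the canonical surjection $\mu:C^*(G)\to C^*_r(G)$ is a $KK$-equivalence; in particular the map $\mu^{*}:KK(C^*_r(G),\mathbb C)\to KK(C^*(G),\mathbb C)$, precomposition of Cuntz pairs with $\mu$, is an isomorphism. Write $[h_t]\in KK(C^*(G),\mathbb C)$ for the class of the one-dimensional $*$-homomorphism $h_t:C^*(G)\to\mathbb C\subseteq\mathbb K$ attached to the trivial representation. Since $\mu^{*}$ is onto there is a class $\xi\in KK(C^*_r(G),\mathbb C)$ with $\mu^{*}(\xi)=[h_t]$.

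I would then realise $\xi$ by a \emph{Cuntz pair of representations} of $C^*_r(G)$: unital $*$-homomorphisms $\sigma,\sigma_0:C^*_r(G)\to B(H)$ on the given separable Hilbert space $H$ with $\sigma(x)-\sigma_0(x)\in\mathbb K$ for all $x$ and $[\sigma,\sigma_0]=\xi$. Any Kasparov module for $(C^*_r(G),\mathbb C)$ can be brought to this form — pass to the associated pair of homomorphisms, then add a common ample degenerate summand to make both of them unital and to put them on $H$ — without changing the $KK$-class. Composing with $\mu$ yields unital $*$-homomorphisms $\sigma\circ\mu,\sigma_0\circ\mu:C^*(G)\to B(H)$, still agreeing modulo $\mathbb K$, with $[\sigma\circ\mu,\sigma_0\circ\mu]=\mu^{*}(\xi)=[h_t]$.

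Finally I would absorb the trivial representation. Fixing a unitary $\mathbb C\oplus H\cong H$ one regards $h_t\oplus(\sigma_0\circ\mu)$ as a unital $*$-homomorphism $C^*(G)\to B(H)$; a Voiculescu-type absorption (compression to a codimension-one subspace does not change a representation modulo $\mathbb K$) lets one choose the identification so that $\sigma\circ\mu(a)-(h_t\oplus\sigma_0\circ\mu)(a)\in\mathbb K$ for all $a\in C^*(G)$, making $(\sigma\circ\mu,\,h_t\oplus\sigma_0\circ\mu)$ a Cuntz pair of unital homomorphisms. Since $h_t$ is a finite-rank, hence degenerate, summand, additivity of the Cuntz pairing gives, using $[\sigma\circ\mu,\sigma_0\circ\mu]=[h_t]$, that its class is $[h_t]-[h_t]=0$ in $KK(C^*(G),\mathbb C)$, which is the assertion. (If one prefers unital data at every stage, one first adjoins a common unital representation of $C^*_r(G)$ to $\sigma$ and $\sigma_0$ before composing with $\mu$; this preserves everything.)

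The step carrying the real weight is not any of the manipulations above — which are routine $C^*$-algebra gymnastics — but the input that $[h_t]$ lies in the image of $\mu^{*}$, equivalently that $[\mu]$ is invertible in $KK$. This is Cuntz's K-amenability theorem, proved in \cite{C} for the groups at hand by an explicit Fredholm-module construction: for a group acting on a tree one uses the representations on $\ell^{2}$ of the vertex set and on $\ell^{2}$ of the edge set, both weakly contained in the regular representation $\lambda$, the one-dimensional discrepancy between them carrying exactly the trivial representation. Note finally that \reflemma{cuntz2} is not needed for the present lemma; it enters later, when the representations furnished here are tensored against other unitary representations.
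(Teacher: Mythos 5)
The paper gives no proof of this lemma at all --- it simply refers the reader to \cite{C} --- so the only issue is whether your derivation from K-amenability is sound. Your overall strategy (use that $\mu^*:KK(C^*_r(G),\mathbb C)\to KK(C^*(G),\mathbb C)$ is an isomorphism, pull $[h_t]$ back to a class $\xi$, and realise $\xi$ by a Cuntz pair of representations of $C^*_r(G)$) is the right one, but the normalisation in your second step is impossible, and not for a removable technical reason. If $\sigma,\sigma_0:C^*_r(G)\to B(H)$ are both \emph{unital} on the same $H$ with $\sigma(x)-\sigma_0(x)\in\mathbb K$, then restricting the class $[\sigma,\sigma_0]$ along the unital inclusion $\mathbb C\to C^*_r(G)$ gives the class of the pair of projections $(\sigma(1),\sigma_0(1))=(1,1)$, which is $0$ in $KK(\mathbb C,\mathbb C)\cong\mathbb Z$; on the other hand $[h_t]$ restricts to $[h_t(1)]-[0]=1$. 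Since the unital inclusions of $\mathbb C$ into $C^*(G)$ and $C^*_r(G)$ are intertwined by $\mu$, the class $\xi=(\mu^*)^{-1}[h_t]$ restricts to $1\neq 0$ on $\mathbb C$ and therefore cannot be represented by a unital Cuntz pair on a single Hilbert space. Hence the identity $[\sigma\circ\mu,\sigma_0\circ\mu]=[h_t]$, which drives your final computation, can never hold with both maps unital.

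The rank-one defect you then try to reinstate by a ``Voiculescu-type absorption'' is in fact the entire content of the lemma: the second member of the pair must be of the form $0\oplus\sigma_0$ on $\mathbb C\oplus H\cong H$, i.e.\ unital onto a corank-one corner, so that $h_t$ exactly fills the missing rank-one slot; only then is $(\sigma\circ\mu,\,h_t\oplus\sigma_0\circ\mu)$ a pair of unital maps with compact differences, and its class is $\mu^*\xi-[h_t]=0$. The blanket claim that ``compression to a codimension-one subspace does not change a representation modulo $\mathbb K$'' is precisely the kind of statement your own $K_0$ bookkeeping rules out, so it cannot be used to smuggle the defect back in at the end. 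The repair is standard: represent $\xi$ by a Cuntz pair whose units are $1$ and $1-e$ with $e$ a rank-one projection, which is possible exactly because $\xi$ restricts to $1$ on $\mathbb C$; this is also the shape of Cuntz's own Fredholm module (vertices versus edges of the tree, with a one-dimensional discrepancy). Two smaller points: $h_t$ is not a degenerate summand (its class is nonzero --- what you actually need is additivity of the Cuntz pairing under orthogonal direct sums), and you should also record why $\sigma,\sigma_0$ can be chosen to land in $B(H)$ for the \emph{given} separable $H$, which is harmless but worth a sentence.
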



Our proof of Theorem \ref{main2} uses the notion of absorbing and unitally absorbing
$*$-homomorphisms. We refer to \cite{Th2} for the definition and the
proof that they exist in the generality required in the
argument. Furthermore, we shall need the following lemma which is a
unital version of Lemma 2.2 in \cite{Th3}. The proof is the same.

\begin{lemma}\label{2.2} Let $A$ be a separable unital $C^*$-algebra, $D
  \subseteq A$ a unital nuclear $C^*$-subalgebra and $B$ a stable
  $\sigma$-unital $C^*$-algebra. Let $\pi : A \to M(B)$ be a unitally
  absorbing $*$-homomorphism. It follows that $\pi|_D : D \to M(B)$ is
  unitally absorbing.
\end{lemma}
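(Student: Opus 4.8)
The notion of a unitally absorbing $*$-homomorphism $\pi : A \to M(B)$ from \cite{Th2} says, in the language of extensions, that the trivial extension of $A$ by $B$ with extension algebra $E_\pi = \pi(A)+B \subseteq M(B)$ absorbs every unital trivial extension of $A$ by $B$; equivalently, $\pi \oplus \rho$ is approximately unitarily equivalent to $\pi$ for every unital completely positive map $\rho : A \to M(B)$ that is multiplicative modulo $B$. I would prove the lemma using the characterisation of such extensions, available in \cite{Th2}, by an internal ``largeness'' property of $E_\pi$: for every $x \in E_\pi$ with $x \notin B$, the hereditary $C^*$-subalgebra $\overline{xBx^*}$ of $B$ contains a stable $C^*$-subalgebra that is full in $B$. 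Two features of this picture drive the argument. First, although for a non-nuclear ambient algebra such as $A = C^*_r(G)$ the property of absorbing \emph{all} unital extensions is a priori stronger than that of absorbing all \emph{nuclear} unital extensions, the latter already implies --- in fact is equivalent to --- the largeness of $E_\pi$. Secondly, largeness is a condition quantified over the individual elements of the extension algebra lying outside $B$, so it is stable under passing to subalgebras.

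The plan then has three steps. (1) Since $D \subseteq A$ is a unital subalgebra, $1_D = 1_A$, so $\pi|_D$ is unital and $E_{\pi|_D} := \pi(D)+B$ is a unital extension algebra contained in $E_\pi$. (2) Largeness descends from $E_\pi$ to $E_{\pi|_D}$: if $x \in E_{\pi|_D}$ and $x \notin B$ then $x \in E_\pi$ and $x \notin B$, so the stable, $B$-full subalgebra of $\overline{xBx^*}$ demanded for $E_{\pi|_D}$ is supplied by the largeness of $E_\pi$, which holds because $\pi$ is unitally absorbing. (3) This is where the hypothesis on $D$ enters. Because $D$ is nuclear, the Choi--Effros lifting theorem --- together with the fact that every completely positive map out of a nuclear $C^*$-algebra is nuclear --- shows that every unital extension of $D$ by $B$ is semisplit by a nuclear unital completely positive map; so for $D$ the classes of unital extensions and of nuclear unital extensions coincide. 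Therefore ``$\pi|_D$ absorbs every unital extension of $D$'' is the same statement as ``$\pi|_D$ absorbs every nuclear unital extension of $D$'', and the latter is exactly what the largeness of $E_{\pi|_D}$ from step (2) delivers through the characterisation in \cite{Th2}. Putting the three steps together, $\pi|_D$ is unitally absorbing.

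The main obstacle is not any single step but the careful handling of the dictionary between the three equivalent viewpoints --- unitally absorbing $*$-homomorphism, unitally absorbing trivial extension, largeness of the extension algebra --- in the unital setting and over the possibly non-nuclear $A$, and the verification that the largeness condition as formulated in \cite{Th2} really restricts verbatim to the extension algebra of a subalgebra. It is worth noting why the obvious shortcut fails: one cannot simply extend $\psi$ to a unital completely positive map on $A$ and invoke absorption there, since a unitally absorbing $*$-homomorphism absorbs only those unital completely positive maps that are multiplicative modulo $B$, and an extension of $\psi$ built from the completely positive approximation property of $D$ will generally not be. The remaining ingredients --- the unital Choi--Effros lifting for the nuclear $D$ and the nuclearity of completely positive maps out of $D$ --- are routine, and the argument is the verbatim unital analogue of the proof of Lemma~2.2 in \cite{Th3}.
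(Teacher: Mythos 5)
Your proof is essentially correct, but it is not the argument the paper is pointing to, and the key citation is misplaced. The ``internal largeness'' criterion you use --- for every $x\in E_\pi\setminus B$ the hereditary subalgebra $\overline{xBx^*}$ of $B$ contains a stable subalgebra that is full in $B$ --- is the \emph{purely large} condition of Elliott and Kucerovsky, not a result of \cite{Th2}. What \cite{Th2} actually supplies, and what the proof of Lemma~2.2 in \cite{Th3} (which the paper declares to be the proof, mutatis mutandis) runs on, is a different characterization: $\pi$ is (unitally) absorbing if and only if it approximately dominates every completely positive contraction $A\to B$, via a sequence $v_n\in M(B)$ with $v_n^*\pi(a)v_n-\varphi(a)\in B$ tending to $0$ and $v_n^*b\to 0$ for $b \in B$. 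In that argument nuclearity of $D$ is used quite differently from how you use it: a c.p.\ contraction $\varphi:D\to B$ is nuclear, hence a point-norm limit of maps $\beta_k\circ\alpha_k$ with $\alpha_k:D\to M_{n_k}(\mathbb C)$; Arveson's extension theorem extends $\alpha_k$ over $A$, absorption of $\pi$ gives domination of $\beta_k\circ\tilde\alpha_k$, restriction gives domination of $\beta_k\circ\alpha_k$ by $\pi|_D$, and the point-norm closedness of the set of dominated maps from \cite{Th2} finishes. Your route --- unitally absorbing $\Rightarrow$ nuclearly absorbing $\Rightarrow$ purely large, pure largeness is an elementwise condition on $E\setminus B$ and so passes verbatim to $E_{\pi|_D}\subseteq E_\pi$, and purely large $\Rightarrow$ nuclearly absorbing $=$ unitally absorbing for the nuclear, separable $D$ (Choi--Effros plus nuclearity of c.p.\ maps out of $D$) --- is a legitimate and arguably more conceptual alternative, at the price of importing the Elliott--Kucerovsky absorption theorem (and verifying its unital/essential hypotheses, which do hold here since an absorbing $\pi$ gives an essential unital extension) where the paper stays entirely inside \cite{Th2}. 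So the last sentence of your plan, that this is ``the verbatim unital analogue of the proof of Lemma~2.2 in \cite{Th3}'', is not accurate.

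Two small points of care if you write this up: ``unitally absorbing'' in \cite{Th2} means absorbing every unital $*$-homomorphism $A\to M(B)$, i.e.\ every unital \emph{trivial} extension, not every u.c.p.\ map that is multiplicative modulo $B$ as your opening sentence suggests; and the direction of Elliott--Kucerovsky you need first is the converse one (absorbing implies purely large), which you get by weakening ``unitally absorbing'' to ``nuclearly absorbing'' before applying their theorem.
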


The next lemma gives us the appropriate substitute for the homotopy of
representations of $\mathbb F_n$ which was a crucial tool in \cite{Th1}. 

\begin{lemma}\label{correction} Let $(G_i)_{i\in \N}$ be a countable collection of
  discrete countable amenable groups and let $G = \star_i G_i $ be
  their free product. Let $\mu : C^*(G) \to
  C^*_r(G)$ be the canonical surjection and let $h_t : C^*(G) \to
  \mathbb C$ be the character corresponding to the trivial
  one-dimensional representation of $G$. There is then a separable infinite-dimensional
  Hilbert space $H$, unital
  $*$-homomorphisms $\sigma,\sigma_0 : C^*_r(G) \to B(H)$ and a
  path $\nu_s : C^*(G) \to B(H), s \in [0,1]$, of unital
  $*$-homomorphism such that
\begin{enumerate}
\item[a)] $\nu_0 = \sigma \circ \mu$;
\item[b)] $\nu_1 = h_t \oplus\sigma_0\circ \mu$;
\item[c)] $\nu_s(a) - \nu_0(a) \in \mathbb K, \ a \in C^*(G), s \in [0,1]$, and
\item[d)] $s \mapsto \nu_s(a)$ is continuous for all $a \in C^*(G)$.
\end{enumerate} 
\begin{proof} Being amenable $ G_i $ has the
Haagerup Property. See the discussion in 1.2.6 of \cite{CCJJV}. It
follows then from Propositions 6.1.1 and 6.2.3 of \cite{CCJJV} that
also $ G $ has the Haagerup Property. Since the Haagerup Property
implies K-amenability by \cite{Tu} we conclude that $G$ is
K-amenable. We can therefore pick
$*$-homomorphisms $\sigma, \sigma_0 : C^*_r(G) \to B(H)$ as in Lemma \ref{K-amen}. 
By adding the same unital and injective $*$-homomorphism to $\sigma$
and $\sigma_0$ we can arrange that both $\sigma$ and $\sigma_0$ are
injective and have no non-zero compact operator in their range. Since
$\mu|_{C^*_r(G_i)} : C^*(G_i) \to C^*_r(G_i)$ is injective it
follows then that $\sigma \circ \mu|_{C^*(G_i)}$ and $(h_t \oplus \sigma_0
\circ \mu)|_{C^*(G_i)}$ are admissible in the sense of Section 3 of
\cite{DE} for each $i$. Thus Theorem 3.12 of
\cite{DE} applies to show that there is a
norm-continuous path $u^i_s, s\in [1,\infty)$, of unitaries in $1 + \mathbb K$ such that$$
\lim_{s \to \infty}
\left\|\sigma\circ \mu|_{C^*(G_i)}(a)  -
  u^i_s \left(h_t \oplus \sigma_0\circ
   \mu\right)|_{C^*(G_i)}(a){u^i_s}^* \right\| = 0
$$ 
for all $a \in
C^*(G_i)$ and 
$$
\sigma \circ \mu|_{C^*(G_i)}(a)  -
  u^i_s \left(h_t \oplus \sigma_0\circ
   \mu\right)|_{C^*(G_i)}(a){u^i_s}^* \in \mathbb K
$$ 
for all $a
  \in C^*\left(G_i\right)$ and all $s \in [1,\infty)$. 
Since the
  unitary group of $1 + \mathbb K$ is connected in norm there are
  therefore norm-continuous paths of unital
$*$-homomorphisms $\nu^j_s : C^*(G_j) \to B(H), \ s \in [0,1], \ j \in
\mathbb N$, such
that
\begin{enumerate}
\item[aj)] $\nu^j_0 = \sigma \circ \mu|_{C^*(G_j)}$;
\item[bj)] $\nu^j_1 = \sigma_0\circ \mu|_{C^*(G_j)} \oplus
  h_t|_{C^*(G_j)}$;
\item[cj)] $\nu^j_s(a) - \nu^j_0(a) \in \mathbb K, \ a \in C^*(G_j),
  \ s\in [0,1]$,
\end{enumerate}
for each $j$. The universal property of the free product construction
gives us then a path of unital
$*$-homomorphisms $\nu_s : C^*(G) \to B(H),  s \in [0,1]$, with the
stated properties, a)-d).

\end{proof}
\end{lemma}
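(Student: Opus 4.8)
The plan is to reduce the construction of the path $\nu_s$ to the analogous problem over each free factor $C^*(G_i)$, where a suitable uniqueness/homotopy theory is available, and then to reassemble the factorwise homotopies by means of the universal property that $C^*(\star_j G_j)$ is canonically the unital free product (coproduct) of the $C^*(G_j)$. First I would record that $G$ is $K$-amenable: each $G_i$, being amenable, has the Haagerup property, hence so does the free product $G$ by the permanence results of \cite{CCJJV}, and then $K$-amenability follows from \cite{Tu}. Lemma \ref{K-amen} therefore furnishes $\sigma,\sigma_0 : C^*_r(G)\to B(H)$ with $\sigma\circ\mu$ and $h_t\oplus\sigma_0\circ\mu$ unital, agreeing modulo $\mathbb K$, and equal in $KK(C^*(G),\mathbb C)$. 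Replacing $\sigma$ and $\sigma_0$ by $\sigma\oplus\pi_0$ and $\sigma_0\oplus\pi_0$, where $\pi_0$ is a fixed faithful unital $*$-homomorphism with no nonzero compact operator in its range (for instance an infinite ampliation of a faithful nondegenerate representation of $C^*_r(G)$), I may assume in addition that $\sigma$ and $\sigma_0$ are injective with no nonzero compacts in their ranges; this preserves unitality, the $\mathbb K$-agreement, and the $KK$-identity.

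Now the core step, performed one index at a time. Since $G_i$ is amenable, $\mu$ restricts to an isomorphism of $C^*(G_i)$ onto $C^*_r(G_i)$, so $C^*(G_i)$ is nuclear, and the restrictions $\sigma\circ\mu|_{C^*(G_i)}$ and $(h_t\oplus\sigma_0\circ\mu)|_{C^*(G_i)}$ are admissible in the sense of Section 3 of \cite{DE}; they moreover define the same class in $KK(C^*(G_i),\mathbb C)$, obtained by restricting the $KK$-identity along the inclusion $C^*(G_i)\hookrightarrow C^*(G)$. Theorem 3.12 of \cite{DE} then yields a norm-continuous path $(u^i_s)_{s\in[1,\infty)}$ of unitaries in $1+\mathbb K$ such that $u^i_s(h_t\oplus\sigma_0\circ\mu)|_{C^*(G_i)}(\cdot)(u^i_s)^*$ differs from $\sigma\circ\mu|_{C^*(G_i)}$ by a compact for every $s$ and converges to it in norm, pointwise on $C^*(G_i)$, as $s\to\infty$. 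Since conjugation by a unitary in $1+\mathbb K$ alters any operator only by a compact, and since the unitary group of $1+\mathbb K$ is norm-connected, I can concatenate this asymptotic path with a path joining $u^i_1$ to $1$ and reparametrise over $[0,1]$ to obtain norm-continuous paths of unital $*$-homomorphisms $\nu^j_s : C^*(G_j)\to B(H)$, $s\in[0,1]$, with $\nu^j_0=\sigma\circ\mu|_{C^*(G_j)}$, with $\nu^j_1=(h_t\oplus\sigma_0\circ\mu)|_{C^*(G_j)}$ after an obvious identification of Hilbert spaces, and with $\nu^j_s(a)-\nu^j_0(a)\in\mathbb K$ for all $a$ and all $s$.

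Finally, since $C^*(G)$ is the unital free product of the $C^*(G_j)$, the family $(\nu^j_s)_j$ assembles, for each fixed $s$, into a unital $*$-homomorphism $\nu_s : C^*(G)\to B(H)$, and properties a) and b) hold because they hold on each generating subalgebra $C^*(G_j)$. For c) and d) I would verify the assertions first on the dense $*$-subalgebra generated by $\bigcup_j C^*(G_j)$ — a compact difference $\nu_s(a)-\nu_0(a)$ and norm-continuity of $s\mapsto\nu_s(a)$ are stable under sums and products because each $\nu_s$ is a contraction — and then pass to all of $C^*(G)$ using the uniform bound $\|\nu_s\|\le 1$ together with the closedness of $\mathbb K$. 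I expect the core step to be the real obstacle: $KK$-triviality of the difference over $C^*(G)$ itself gives no homotopy, since there is no classification of $*$-homomorphisms out of the nonnuclear algebra $C^*(G)$; the remedy is precisely to localise to the nuclear free factors, check there the admissibility hypotheses of \cite{DE} (which is what forces the preliminary adjustment of $\sigma$ and $\sigma_0$), and then transport the resulting homotopies back through the free-product universal property, while confirming that c) and d) survive the passage from generators to $C^*(G)$.
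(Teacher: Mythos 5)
Your proposal is correct and follows essentially the same route as the paper: K-amenability via the Haagerup property and \cite{Tu}, the adjustment of $\sigma,\sigma_0$ to make them admissible, Theorem 3.12 of \cite{DE} applied factorwise to the nuclear algebras $C^*(G_i)$, and reassembly through the universal property of the free product. Your extra remarks — that the $KK$-identity must be restricted along $C^*(G_i)\hookrightarrow C^*(G)$ before invoking Dadarlat--Eilers, and that c) and d) are checked on the dense subalgebra generated by the factors — only make explicit what the paper leaves implicit.
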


\begin{lemma} \label{fuld2}
In the setting of Theorem \ref{main} it holds that every extension
$\varphi : C^*(G) \to Q(B)$ of $C^*(G)$ by $B$ is invertible. If
$\varphi $ is unital, it is invertible in
the semi-group of unitary equivalence classes of unital extensions,
modulo the unital split extensions.

\end{lemma}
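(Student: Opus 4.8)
The plan is to use that $C^*(G)$ is the full free product $\ast_{i}C^*(G_i)$, amalgamated over $\mathbb C 1$, of the unital, separable, \emph{nuclear} $C^*$-algebras $C^*(G_i)$ (each $G_i$ is amenable), together with the classical fact that every extension of a separable nuclear $C^*$-algebra by a stable $\sigma$-unital $C^*$-algebra is invertible (Choi--Effros lifting plus the standard construction of inverses of semisplit extensions). Note that K-amenability of $G$, established in the proof of Lemma~\ref{correction}, is not needed here. The key observation is that one should \emph{not} try to lift $\varphi$ directly: on a single free factor the restriction $\varphi|_{C^*(G_i)}$ need not be split (for instance, if $G_i=\mathbb Z$ it can have nonzero index), so there is no $*$-homomorphic lift on the factors to assemble. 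Instead one lifts $\varphi$ together with a \emph{piecewise inverse}.

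Concretely, assume first that $\varphi$ is unital. For each $i$ the restriction $\varphi|_{C^*(G_i)}\colon C^*(G_i)\to Q(B)$ is an extension of a unital nuclear $C^*$-algebra, hence invertible in the semigroup of unital extensions modulo unital split extensions; so there are a unital extension $\tau_i\colon C^*(G_i)\to Q(B)$ and a unital $*$-homomorphism $\rho_i\colon C^*(G_i)\to M(B)$ with $q_B\circ\rho_i=\varphi|_{C^*(G_i)}\oplus\tau_i$, where I use stability of $B$ to fix once and for all an isomorphism $M_2\otimes B\cong B$ and thereby view all the $\rho_i$ as landing in the same $M(B)$ and all the $\tau_i$ in the same $Q(B)$, identifying $M_2(Q(B))$ with $Q(B)$ accordingly. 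By the universal property of the full free product the $\rho_i$ assemble to a unital $*$-homomorphism $\rho\colon C^*(G)\to M(B)$ with $\rho|_{C^*(G_i)}=\rho_i$, and the $\tau_i$ assemble to an extension $\varphi':=\ast_{i}\tau_i\colon C^*(G)\to Q(B)$. Then $q_B\circ\rho$ and $\varphi\oplus\varphi'$ are two $*$-homomorphisms $C^*(G)\to Q(B)$ that agree on each free factor $C^*(G_i)$, hence they coincide; so $\varphi\oplus\varphi'$ is split, $\varphi$ is invertible with inverse $\varphi'$, and everything has stayed inside the category of unital extensions modulo unital split extensions. Compressing $\rho$ to the first matrix corner also produces the completely positive contractive splitting mentioned after Theorem~\ref{main}. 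If $\varphi$ is not unital, one first reduces to the unital case in the usual way, adding a suitable split extension so that the image of the unit becomes a full projection.

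Modulo the standard extension theory of nuclear $C^*$-algebras the argument is soft, and the one thing that really has to be arranged carefully — which is the closest thing to an obstacle — is the uniform choice of the $\tau_i$ and $\rho_i$ in a single ambient $Q(B)$, respectively $M(B)$, so that the universal property of $\ast_iC^*(G_i)$ genuinely yields honest $*$-homomorphisms $\rho$ and $\varphi'$; this is exactly where stability of $B$ is used, to absorb the matrix amplifications back into $B$ consistently over all $i$. One must also invoke the \emph{unital} form of the extension theory for the nuclear factors, which is what keeps the whole construction unital and gives the second assertion of the lemma. Beyond that I expect no difficulty: all the analytic content is concentrated in the nuclear factors $C^*(G_i)$, where it is furnished by Choi--Effros, and the free product merely propagates the inverses from the factors to $C^*(G)$.
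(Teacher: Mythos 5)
Your proposal is correct and takes essentially the same route as the paper: invert $\varphi|_{C^*(G_i)}$ on each nuclear free factor via Choi--Effros, make the inverses and lifts unital, and assemble them with the universal property of the full free product $C^*(G)=\star_i C^*(G_i)$. The only point where you diverge (and are a bit vague) is the reduction to the unital case: the paper does not add a split extension but instead replaces each $\varphi|_{C^*(G_i)}$ by $\varphi|_{C^*(G_i)}+\omega_i\,\varphi(1)^{\perp}$, where $\omega_i$ is the trivial character --- a safer move, since $\varphi(1)^{\perp}$ need not lift to a projection in $M(B)$, so a split extension with that unit is not obviously available; the added summand $\star_i\omega_i\,\varphi(1)^{\perp}$ is then simply absorbed into the inverse rather than required to be split.
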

\begin{proof} Assume first that $ \varphi $ is unital. For each $i \in \N$ the $C^*$-algebra $C^*_r(G_i) = C^*(G_i)$ is
  nuclear and hence the unital extensions $ \varphi_i = \varphi|_{C^*(G_i)} :
  C^*\left(G_i\right) \to Q(B) $ are all invertible. There are
  therefore unital extensions $\psi_i : C^*\left(G_i\right) \to Q(B)$
  and $*$-homomorphisms $\pi_i : C^*\left(G_i\right) \to M(B)$ such that $\varphi_i
  \oplus \psi_i = q_B \circ \pi_i, \ i \in \N$. Let $ \omega_i:C^*(G_i) \to \C $ denote the $*$-homomorphism corresponding to the trivial unitary representation of $ G_i $. By replacing $ \pi_i $ with 
$ \pi_i + \omega_i \pi_i(1)^\perp $ we may assume that $ \pi_i $ is unital. The universal
  property of the free product gives us a unital extension $\psi = \star_i \psi_i
  : C^*(G) \to Q(B)$ and a unital
  $*$-homomorphism $\pi = \star_i \pi_i : C^*(G) \to M(B)$. Since $\varphi \oplus \psi
  = q_B \circ \pi$, this completes the proof of the unital case.

Now let $ \varphi $ be a general extension. Again consider $ \varphi_i = \varphi|_{C^*(G_i)} :
  C^*\left(G_i\right) \to Q(B) $. Then $ \varphi_i(1)=\varphi_j(1)=p $ for all $ i,j \in \N $, 
so the extensions $ \tilde{\varphi}_i:= \varphi_i + \omega_i p^\perp $ are all unital. As above we get 
$\psi : C^*(G) \to Q(B)$ and a unital
  $*$-homomorphism $\pi : C^*(G) \to M(B)$ such that $\left(\star_i \tilde{\varphi}_i\right) \oplus \psi
  = q_B \circ \pi$. Since $ \star_i \tilde{\varphi}_i$ and $\varphi \oplus \left(\star_i \omega_i p^{\perp}\right) $ are equal in $ \Ext(C^*(G),B) $ this completes the proof.
\end{proof}




\begin{proof}[Proof of Theorem \ref{main2}] In order to control the
  image of the unit for the extensions we consider, we need a result of
  Skandalis which we first describe. Note that the unital inclusion $i
: \mathbb C \to C^*(G)$ has a left-inverse $h_t : C^*(G) \to \mathbb C$
given by the trivial one-dimensional representation $t$. Therefore the map
$$
i^* : \Ext^{-1}(C^*(G),SB) \to \Ext^{-1}(\mathbb C,SB) = K_0(B)
$$
is surjective. We put this into the six-term exact sequence
of Skandalis, 10.11 in \cite{S}, whose proof can be found in
\cite{MT4}. Using the notation from \cite{MT4} we obtain the following commuting diagram with exact rows:
\begin{equation}\label{diagram}
\begin{xymatrix}{
0 \ar[r] & \Ext_\text{unital}^{-1}\left(C^*(G),B\right) \ar[r] &
\Ext^{-1}(C^*(G),B) \ar[r] &  K_0(Q(B)) \ar@{=}[d]  \\
{} &  \Ext_\text{unital}^{-1}\left(C_r^*(G),B\right) \ar[r]\ar[u]^-{\mu^*}  &
\Ext^{-1}(C_r^*(G),B) \ar[r] \ar[u]^-{\mu^*} &  K_0(Q(B))  
}
\end{xymatrix}
\end{equation}
$G$ is K-amenable as observed in the proof of Lemma \ref{correction}. By \cite{C} this implies that $\mu^* : \Ext^{-1}\left(C^*_r(G),B\right) \to
  \Ext^{-1}\left(C^*(G),B\right)$ is an isomorphism.

Let $ \varphi: C_r^*(G) \to Q(B) $ be a unital extension. Let $\pi_1: C^*_r(G) \to Q(B)$ be a unitally
absorbing split extension (whose existence is guaranteed by
\cite{Th2}) and set $\varphi' = \varphi \oplus \pi_1$. It follows from
Lemma \ref{fuld2} and a diagram chase in (\ref{diagram}) that there
is an invertible unital extension $\varphi'' : C^*_r(G) \to Q(B)$ such that
\begin{equation}\label{eq102}
\left[\varphi' \circ \mu \oplus \varphi''\circ \mu\right] = 0
\end{equation} 
in $\Ext^{-1}_\text{unital}\left(C^*(G),B\right)$. Since $C^*(G_i)$ is nuclear $\mu|_{C^*(G_i)} : C^*(G_i) \to C^*_r(G_i), i \in \N$, is a $*$-isomorphism and it
follows from Lemma \ref{2.2} that
$\pi_1|_{C^*_r(G_i)} : C^*_r(G_i) \to Q(B)$ is unitally absorbing for each $i
\in \N$. Hence $\pi_1 \circ
\mu|_{C^*(G_i)} : C^*(G_i) \to Q(B)$ is a unitally absorbing split
  extension. It follows therefore from (\ref{eq102}) that
  $\left(\varphi' \circ \mu \oplus \varphi''\circ
    \mu\right)|_{C^*(G_i)}$ is a unitally split extension for each $i$. As in
  the proof of Lemma \ref{fuld2} this implies that $\varphi' \circ \mu
  \oplus \varphi''\circ \mu$ is unitally split. There is therefore a
  unitary representation $R : G \to M(B)$ such that 
\begin{equation}\label{eq107}
q_B \circ h_R = \varphi' \circ \mu
  \oplus \varphi''\circ \mu,
\end{equation}
where $h_R : C^*(G) \to M(B)$ is the $*$-homomorphism defined by $R$.


Consider the homotopy $\nu_s$ from Lemma \ref{correction}. Let $\nu'_s : G \to B(H)$ be the unitary representation defined by $\nu_s$ so
that $\nu_s = h_{\nu'_s}$. It follows from the property a) of Lemma
\ref{correction} that $\nu'_0$ is weakly
contained in the left-regular representation of $G$ and from b) that
$\nu'_1$ is a direct sum $t \oplus \lambda_0$ where $\lambda_0$ is a
representation of $G$ which is weakly contained in the left-regular
representation of $G$. Consider the unitary
representations
$$
R \otimes \nu'_s  : G \to M(B)\otimes B(H) \subseteq M(B \otimes \mathbb
K), \ s \in [0,1].
$$
Then $q_{B \otimes \mathbb K} \circ h_{R \otimes \nu'_s} : C^*(G)
\to Q(B\otimes \mathbb K), \ s \in [0,1]$, is a norm-continuous path
of extensions. Note that 
$$
 q_{B \otimes \mathbb K} \circ h_{R \otimes \nu'_1}  = q_{B \otimes
  \mathbb K} \circ h_{R \otimes t} \oplus q_{B \otimes
  \mathbb K} \circ h_{R \otimes \lambda_0}  = (\varphi' \oplus
\varphi'')\circ \mu \oplus q_{B \otimes
  \mathbb K} \circ h_{R \otimes \lambda_0}.
$$
Since $R \otimes \nu'_0$ and $R \otimes \lambda_0$ are
weakly contained in the left-regular representation of $G$ by Lemma
\ref{cuntz2} it follows from an argument almost identical with one
used in \cite{Th1} that
each $q_{B \otimes \mathbb K} \circ h_{R \otimes\nu'_s}$ factors
through $C_r^*(G)$ and gives us a strong homotopy connecting the split
extension $q_{B \otimes \mathbb K} \circ h_{R \otimes \nu'_0} :
C^*_r(G) \to Q(B \otimes \mathbb K)$ to the direct sum $ \varphi' \oplus
\varphi''  \oplus q_{B \otimes
  \mathbb K} \circ h_{R \otimes \lambda_0}$. For completeness we include the argument: Let $s \in [0,1]$ and $x =
\sum_{j} c_j g_j  \in
\mathbb CG$, where $c_j \in \mathbb C$ and $g_j
\in G$. Then
\begin{equation}\label{e7}
h_{R \otimes \nu'_s}(x) = \sum_j c_j R\left(g_j\right)\otimes
\nu'_0\left(g_j\right) + \sum_j c_j R\left(g_j\right) \otimes
\Delta\left(g_j\right),
\end{equation}
where $\Delta\left(g_j\right) = \nu'_s\left(g_j\right) -
\nu'_0\left(g_j\right)$. Note that $\Delta\left(g_j\right) \in
\mathbb K$ by c). Since $\nu'_0$ is weakly contained in the left regular
representation we can use Lemma \ref{cuntz2} to conclude that
$\left\| \sum_j c_j R\left(g_j\right)\otimes
\nu'_0\left(g_j\right)\right\| \leq \left\|x\right\|_{C^*_r\left(G\right)}$ and hence
\begin{equation*}\label{e2} 
\left\|q_{B\otimes \mathbb K}\left(\sum_j c_j R\left(g_j\right)\otimes
\nu'_0\left(g_j\right)\right)\right\| \leq  \left\|x\right\|_{C^*_r\left(G\right)}.
\end{equation*}
To handle the second term in (\ref{e7}) note that $M(B)\otimes
\mathbb K/B \otimes \mathbb K \simeq Q(B) \otimes \mathbb K$ so
\begin{equation*}
\left\| q_{B \otimes \mathbb K}\left(\sum_j c_j R\left(g_j\right) \otimes
\Delta\left(g_j\right)\right) \right\| = \left\| \sum_j c_j
\left(\varphi'  \oplus \varphi'' \right)\left(g_j\right) \otimes \Delta\left(g_j\right)\right\|_{Q(B)
\otimes \mathbb K} .
\end{equation*}
Since $\varphi' \oplus \varphi'': C^*_r\left( G\right) \to
Q(B)$ is injective (because $\varphi'$ contains the unitally absorbing
split extension $\pi_1$) and
$(\varphi' \oplus \varphi'') \otimes \id_{\mathbb K}$ isometric,
\begin{equation*}
\left\| \sum_j c_j
\left(\varphi' \oplus \varphi''\right)\left(g_j\right) \otimes \Delta\left(g_j\right)\right\|_{Q(B)
\otimes \mathbb K} = \left\| \sum_j c_j \lambda\left(g_j\right)  \otimes
\Delta\left(g_j\right)\right\|_{C^*_r\left(G\right)
\otimes \mathbb K} .
\end{equation*}
And 
\begin{equation*}\label{e3}
\begin{split}
&\left\| \sum_j c_j\lambda\left(g_j\right)  \otimes
\Delta\left(g_j\right)\right\|_{C^*_r\left(G\right)
\otimes \mathbb K} = \\
&\left\| \sum_j c_j \lambda\left(g_j\right)  \otimes
\nu'_s\left(g_j\right) -  \sum_j c_j \lambda\left(g_j\right)  \otimes
\nu'_0\left(g_j\right) \right\| \leq 2\left\|x\right\|_{C^*_r(G)},
\end{split}
\end{equation*}
by Fell's absorbtion principle or Lemma \ref{cuntz2}. Inserting these estimates into
(\ref{e7}) yields the conclusion that
$$
\left\|q_{B \otimes \mathbb K} \circ h_{R \otimes \nu'_s}(x) \right\|
\leq 3 \left\|x\right\|_{C^*_r(G)} ,
$$
proving that $q_{B \otimes \mathbb K} \circ h_{R \otimes \nu'_s}$
factors through $C^*_r(G)$ as claimed.

It remains to reduce the general case of a possibly non-unital
extension to the case of a unital extension. Let $ \varphi: C^*_r(G)
\to Q(B) $ be an arbitrary extension. From Lemma \ref{fuld2} and K-amenability we get 
an invertible extension $\varphi':
C^*_r(G) \to Q(B)$ such that $\left[\varphi \circ \mu \oplus \varphi'
  \oplus \mu\right] = 0$ in $\Ext^{-1}(C^*(G),B)$. In particular,
$$
p = \left(\varphi \circ \mu \oplus \varphi' \circ \mu\right)(1) 
$$
is a projection which represents $0$ in $K_0(Q(B))$. Since $K_0(M(B)) = 0$ we see $[1-p] +
[p] = [1] = 0$ in $K_0(Q(B))$ so we find that
also $p^{\perp} = 1-p$ represents $0$ in $K_0(Q(B))$.
Since $ M_k(Q(B)) \simeq Q(B) $ for all $ k $ this implies that
$$
p \oplus 1 \ \sim \ 0 \oplus 1 \ \text{and } p^{\perp} \oplus 1 \ \sim
\ 0 \oplus 1
$$
in $M_2(Q(B))$, where $\sim$ is Murray-von Neumann equivalence. It follows that
$$
p \oplus 1 \oplus 0 \ \sim \ 1 \oplus 0 \oplus 0 \ \text{and} \
p^{\perp} \oplus 0 \oplus 1 \ \sim \  0 \oplus 1 \oplus 1
$$
in $M_3(Q(B))$. 
So there is a unitary $w \in M_4(Q(B))$
  contained in the connected component of the unit in the unitary
  group of $M_4(Q(B))$ such that
$$
w\left(p \oplus 1 \oplus 0 \oplus 0\right)w^* = 1 \oplus 0 \oplus 0
\oplus 0 .
$$
Let $\chi : C^*_r(G) \to Q(B)$ be a unital split extension. It follows
that 
\begin{equation}\label{a1}
w \left( (\varphi \circ \mu \oplus \varphi'\circ \mu) \oplus \chi
  \circ \mu \oplus 0 \oplus 0\right)w^* = \psi_0 \oplus 0 \oplus 0 \oplus 0
\end{equation}
for some unital extension $\psi_0 : C^*(G) \to Q(B)$. It follows from
(\ref{a1}) that $\psi_0$ factors through $C^*_r(G)$, i.e. there is
a unital extension $\psi: C^*_r(G) \to Q(B)$ such that $\psi_0 = \psi \circ \mu$. Via an isomorphism $M_4(Q(B)) \simeq M_2(Q(B))$ which
leaves the upper lefthand corner unchanged, we see that there is
an invertible extension $\varphi'' : C^*_r(G) \to Q(B)$ and a unitary $u$ in the connected component of
$1$ such that 
$$
\Ad u  \circ ( \varphi \oplus \varphi'' )  = \psi \oplus 0  
$$
as $*$-homomorphisms $C^*_r(G) \to Q(B)$. Since $\psi$ is unital the
first part of the proof gives us an invertible (unital) extension $\psi' : C^*_r(G) \to Q(B)$ such
that $\psi \oplus \psi' $ is strongly homotopic to a split
extension. Since
$$
\varphi \oplus \varphi'' \oplus \psi'  = \Ad (u^* \oplus 1) \circ (\psi
\oplus 0 \oplus \psi'),
$$
we conclude that $\varphi \oplus \varphi'' \oplus \psi'$ is
strongly homotopic to a split extension. Note that $\varphi'' \oplus
\psi'$ is invertible.
\end{proof}

Let $A$ be a separable $C^*$-algebra and $B$ a stable $\sigma$-unital $C^*$-algebra.
Following \cite{MT1} we let $\Ext^{-1/2}(A,B)$ denote the group of unitary equivalence
classes of semi-invertible extensions
of $A$ by $B$. There is then an obvious map
$$
\Ext^{-1}(A,B) \to \Ext^{-1/2}(A,B)
$$
which in \cite{Th1} was shown to be an isomorphism when $B = \mathbb K$ and $A =
C^*_r(\mathbb F_n)$. We can now
extend this conclusion as follows.

\begin{thm}\label{mainiso} Let $(G_i)_{i\in \N}$ be a countable collection of
  discrete countable amenable groups and let $G = \star_i G_i $ be
  their free product. Let $B$ be a stable $\sigma$-unital
  $C^*$-algebra. It follows that $C^*_r(G)$ satisfies the UCT and that the
  natural map
  $\Ext^{-1}(C^*_r(G),B) \to \Ext^{-1/2}(C^*_r(G),B)$ is an isomorphism.
\begin{proof} It follows from Theorem \ref{main} that the map
  $\Ext^{-1}(C^*_r(G),B) \to \Ext^{-1/2}(C^*_r(G),B)$ is
  surjective. To conclude that the map is also injective note that the
  six-term exact sequence of $K$-theory arising from an asymptotically split
  extension has trivial boundary maps and the resulting group
  extensions are split. Hence the injectivity of the map we consider will follow if we can
  show that $C^*_r(G)$ satisfies the UCT. Since $G$ is K-amenable $C^*_r(G)$
  is $KK$-equivalent to $C^*(G)$, cf. \cite{C}, so we may as well
  show that $C^*(G)$ satisfies the UCT. We do this in the following
  three steps: Since the class of $C^*$-algebras which satisfies the
  UCT is closed under countable inductive limits we need only show
  that $ C^*\left(\star_{i \leq n} G_i\right)$ satisfies the UCT. Next
  observe that it follows from \cite{Th3} that an amalgamated free
  product $A \star_{\mathbb C} B$ of unital separable $C^*$-algebras
  $A$ and $B$ is KK-equivalent to the mapping cone of the inclusion
  $\mathbb C \subseteq A \oplus B$. Thus $A\star_{\mathbb C} B$ will
  satisfy the UCT when $A$ and $B$ do. Since
$$
C^*\left(\star_{i \leq n} G_i\right) \simeq C^*\left(G_1\right)
\star_{\mathbb C} C^*\left(G_2\right)\star_{\mathbb C} \dots
\star_{\mathbb C} C^*(G_n) 
$$
we can apply this observation $n -1$ times to conclude that $
C^*\left(\star_{i \leq n} G_i\right)$ satisfies the UCT if each
$C^*(G_i)$ does. And this follows from \cite{Tu} because $G_i$ is amenable.
\end{proof}
\end{thm}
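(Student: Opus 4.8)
The plan is to split the statement into the two assertions --- surjectivity and injectivity of the natural map $\Ext^{-1}(C^*_r(G),B) \to \Ext^{-1/2}(C^*_r(G),B)$ --- and then reduce injectivity to the UCT, which is proved by an inductive-limit argument. For surjectivity I would simply invoke Theorem \ref{main}: every extension $\varphi$ becomes asymptotically split after adding the invertible extension $\varphi'$, so $[\varphi] = -[\varphi']$ lies in the image of the usual $\Ext^{-1}$. For injectivity, the key observation is that an asymptotically split extension has vanishing $K$-theory boundary maps and splits the associated group extensions; once we know $C^*_r(G)$ satisfies the UCT, the usual argument (as in \cite{Th1}) shows that any extension which is trivial in $\Ext^{-1/2}$ is already trivial in $\Ext^{-1}$. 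So the real content is the UCT for $C^*_r(G)$.

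To prove $C^*_r(G)$ satisfies the UCT, first reduce to $C^*(G)$: since $G$ is K-amenable (established in the proof of Lemma \ref{correction}), $C^*_r(G)$ and $C^*(G)$ are $KK$-equivalent by \cite{C}, and the UCT is a $KK$-invariant. Next, write $G = \star_i G_i$ as an increasing union of the finite free products $\star_{i\le n}G_i$, so that $C^*(G)$ is the inductive limit of the $C^*(\star_{i\le n}G_i)$; since the UCT class is closed under countable inductive limits, it suffices to treat each $C^*(\star_{i\le n}G_i)$. Here I would use the isomorphism $C^*(\star_{i\le n}G_i) \simeq C^*(G_1)\star_{\mathbb C}\cdots\star_{\mathbb C}C^*(G_n)$ together with the result from \cite{Th3} that an amalgamated free product $A\star_{\mathbb C}B$ of unital separable $C^*$-algebras is $KK$-equivalent to the mapping cone of $\mathbb C \hookrightarrow A\oplus B$; since mapping cones preserve the UCT class, $A\star_{\mathbb C}B$ satisfies the UCT whenever $A$ and $B$ do. Applying this $n-1$ times reduces everything to the individual $C^*(G_i)$, which satisfy the UCT because the $G_i$ are amenable (hence a-T-menable, hence in the UCT class by \cite{Tu}).

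The main obstacle, I expect, is marshalling the external inputs correctly rather than any genuinely new computation: one must verify that the $KK$-equivalence of \cite{Th3} applies in the unital separable setting at hand, that the mapping-cone and inductive-limit closure properties of the UCT class are available in the form needed, and --- for injectivity --- that the ``asymptotically split implies trivial in $\Ext^{-1}$'' step from \cite{Th1} goes through once the UCT is known (this uses that the six-term sequence degenerates and that $KK$-equivalence plus UCT pins down the $\Ext$ groups). None of these steps is hard individually, but the argument is a chain in which each link depends on a separate prior result, so the care lies in the bookkeeping.
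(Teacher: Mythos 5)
Your proposal is correct and follows essentially the same route as the paper: surjectivity from Theorem \ref{main}, injectivity reduced to the UCT via the degenerate six-term sequence of an asymptotically split extension, and the UCT for $C^*(G)$ (equivalently $C^*_r(G)$ by K-amenability) established by combining closure under inductive limits, the $KK$-equivalence of $A\star_{\mathbb C}B$ with the mapping cone of $\mathbb C\subseteq A\oplus B$ from \cite{Th3}, and \cite{Tu} for the amenable factors. No substantive differences.
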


\end{document}